\definecolor{ForestGreen}{rgb}{0.1,0.6,0.05}
\definecolor{EgyptBlue}{rgb}{0.063,0.1,0.6}
\newtheorem{theorem}{Theorem}
\newtheorem{proposition}[theorem]{Proposition}
\newtheorem{lemma}[theorem]{Lemma}
\theoremstyle{definition}
\newtheorem{remark}[theorem]{Remark}
\let\OLDthebibliography\thebibliography
\renewcommand\thebibliography[1]{
	\OLDthebibliography{#1}
	\setlength{\parskip}{1pt}
	\setlength{\itemsep}{1pt plus 0.3ex}
}
\numberwithin{equation}{section}
\numberwithin{theorem}{section}
\numberwithin{equation}{section}
\numberwithin{theorem}{section}
\DeclareRobustCommand{\oominus}{%
	\mathbin{\mathpalette\o@minus@circ\relax}%
}
\newcommand{\o@minus@circ}[2]{%
	\ooalign{$\m@th#1\ominus$\cr$\m@th#1\hspace{.1075em} \circ$\cr}%
}
\title[]{Nonradiality of second eigenfunctions of the fractional Laplacian in a ball}
\author[J.~Benedikt]{Ji\v{r}\'{i} Benedikt}
\address{\newpage
	Department of Mathematics and NTIS, Faculty of Applied Sciences,
	\newline\indent 
	University of West Bohemia, Univerzitn\'i 8, 301 00 Plze\v{n}, Czech Republic
}
\email{benedikt@kma.zcu.cz}
\author[V.~Bobkov]{Vladimir Bobkov}
\address{
	Institute of Mathematics, Ufa Federal Research Centre, RAS,
	\newline\indent 
	Chernyshevsky str. 112, 450008 Ufa, Russia
	\\
	\newline\indent
	Department of Mathematics and NTIS, Faculty of Applied Sciences,
	\newline\indent 
	University of West Bohemia, Univerzitn\'i 8, 301 00 Plze\v{n}, Czech Republic
}
\email{bobkov@matem.anrb.ru}
\author[R.~N.~Dhara]{Raj Narayan Dhara}
\address{Department of Mathematics and Statistics, Faculty of Science
	\newline\indent
	Masaryk University, Kotl\'a\v{r}sk\'a 2, 611 37 Brno, Czech Republic
}
\email{dhara@math.muni.cz}
\author[P.~Girg]{Petr Girg}
\address{
	Department of Mathematics and NTIS, Faculty of Applied Sciences,
	\newline\indent 
	University of West Bohemia, Univerzitn\'i 8, 301 00 Plze\v{n}, Czech Republic
}
\email{pgirg@kma.zcu.cz}
\date{}
\subjclass[2010]{
	35P15,  
	35R11,  
	35B06,  
	47A75.  
}
\keywords{Fractional Laplacian, eigenvalues, symmetries of eigenfunctions, ball, polarization.}
\thanks{
	V.~Bobkov was supported in the framework of executing
	the development program of Volga Region Mathematical Center (agreement no.~075-02-2022-888).
	R.~N.~Dhara was supported by Mobility 3.0, Project no.:   CZ.02.2.69/0.0/0.0/16\_027/0008370 and Czech Science Foundation, project GJ19-14413Y}
\begin{document}
	\begin{abstract} 
		Using symmetrization techniques, we show that, for every $N \geq 2$, any second eigenfunction of the fractional Laplacian  in the $N$-dimensional unit ball with homogeneous Dirichlet conditions is nonradial, and hence its nodal set is an equatorial section of the ball. 
	\end{abstract} 
	\maketitle 
	
	\section{Introduction and main result}\label{sec:intro}
	Let $\Omega \subset \mathbb{R}^N$ be a bounded open set, $N \geq 2$.
	For any $s \in (0,1)$, consider the eigenvalue problem
	\begin{equation}\label{eq:D0}
		\left\{
		\begin{aligned}
			(-\Delta)^s u &= \lambda u &&\text{in}~ \Omega,\\
			u &= 0 &&\text{in}~ \mathbb{R}^N \setminus \Omega,
		\end{aligned}
		\right.
	\end{equation}
	where $(-\Delta)^s$ is the fractional Laplacian defined pointwise (for a suitable class of functions, see, e.g., \cite[Section 2]{garofalo}) as
	$$
	(-\Delta)^s u(x) = 
	\frac{s 2^{2s} \Gamma(\frac{N+2s}{2})}{\pi^\frac{N}{2} \Gamma(1-s)}
	\lim_{\varepsilon \to 0+} \int_{\mathbb{R}^N \setminus B(0,\varepsilon)} \frac{u(y)-u(x)}{|y-x|^{N+2s}} \, dy.
	$$
	Let us mention that if $s \to 1-$, then $(-\Delta)^s$ ``tends'' to the usual Laplace operator $-\Delta$, 
	while if $s \to 0+$, then $(-\Delta)^s$ ``tends'' to the identity operator, see, e.g., \cite{BH,Hitch,ST} for rigorous details.
	It is known that the eigenvalues of the problem \eqref{eq:D0} form a nondecreasing sequence 
	$$
	0 < \lambda_1 < \lambda_2 \leq \dots \leq \lambda_k \to +\infty, \quad k \to +\infty,
	$$
	see, e.g. \cite[Proposition 9 (d)]{Servadei2}.
	The first eigenfunction $\varphi_1$ associated with $\lambda_1$ 
	is unique modulo scaling and 
	has a constant sign in $\Omega$, while any eigenfunction associated with $\lambda_k$, $k \geq 2$, is sign-changing, see, e.g., \cite[Theorem 2.8]{BP}.
	We refer to 
	\cite{BP,frank,lindgren,Servadei2}
	for some other fundamental properties of eigenfunctions and eigenvalues of the problem \eqref{eq:D0}. 
	
	Consider now the problem \eqref{eq:D0} in the $N$-dimensional open unit ball $B \subset \mathbb{R}^N$ centred at the origin:
	\begin{equation}\label{eq:D}
		\left\{
		\begin{aligned}
			(-\Delta)^s u &= \lambda u &&\text{in}~ B,\\
			u &= 0 &&\text{in}~ \mathbb{R}^N \setminus B.
		\end{aligned}
		\right.
	\end{equation}
	Note that the first eigenfunction of \eqref{eq:D} is radial, as it follows, e.g., from the results of \cite{DKK}.
	Let us denote by $\lambda_\circledcirc$ the second smallest eigenvalue of \eqref{eq:D} which has an associated radial eigenfunction, 
	and by $\lambda_\ominus$ an eigenvalue whose associated eigenfunction $u$ has an $(N-1)$-dimensional unit ball centred at the origin as its nodal set $\overline{\{x\in B: u(x)=0\}}$.
	It follows from \cite[Proposition 1.1 and Section 3]{DKK} (see also Section \ref{sec:eigenvaluesinball} below) that $\lambda_2 = \min\{\lambda_\ominus, \lambda_\circledcirc\}$.	
	
	The conjecture attributed in \cite{DKK} to \textsc{Kulczycki} asserts that $\lambda_\ominus < \lambda_\circledcirc$, and, consequently, $\lambda_2 = \lambda_\ominus$, for any $N \geq 1$ and $s \in (0,1)$. 
	For the local Laplacian (i.e., $s=1$) this assertion is well known to be true.
	In this classical case and for $N \geq 2$ (the nontrivial case), the assertion follows, for instance, from the interlacing of zeros of appropriate Bessel functions, see, e.g., \cite[Section 15.22]{watson}.
	On the other hand, in the nonlocal setting, the situation is considerably different, which is caused, 
	in particular, by the lack of explicit (i.e., closed form) solutions of the problem \eqref{eq:D}.
	Nevertheless, this conjecture was proved 
	by \textsc{Ba\~{n}uelos \& Kulczycki} \cite{BanKul} in the case $N=1$ and $s=1/2$, and then by \textsc{Dyda, Kuznetsov, \& Kwa\'snicki} \cite{DKK} in two cases: either $1 \leq N \leq 9$ and $s=1/2$, or $N=1,2$ and $s \in (0,1)$.
	Moreover, there is a strong numerical evidence in \cite{DKK} that \textsc{Kulczycki}'s conjecture is true for $1 \leq N \leq 9$ and $s \in (0,1)$.
	Later, the analytic approach of \cite{DKK} was  extended by \textsc{Ferreira} \cite{F} to cover the case $N=3$ and $s \in (0,1)$.
	The proof of \cite{DKK} (and \cite{F}) is based on the establishment of an upper bound $\overline{\lambda}_\ominus$ for $\lambda_\ominus$ and a lower bound $\underline{\lambda}_\circledcirc$ for $\lambda_\circledcirc$ such that
	$$
	\lambda_\ominus \leq \overline{\lambda}_\ominus < \underline{\lambda}_\circledcirc \leq \lambda_\circledcirc,
	$$
	which yields the claimed result. 
	The restrictions on $N$ and $s$ in \cite{DKK,F} are caused by the complexity of construction of the bounds $\overline{\lambda}_\ominus$ and $\underline{\lambda}_\circledcirc$. 
	During the preparation of the manuscript, we have noticed that the conjecture was fully resolved by \textsc{Fall et al} \cite{FFTW} by estimating the Morse index of the second radial eigenfunctions.

	The aim of the present article is to provide another proof of this result following a different strategy from that in \cite{BanKul,DKK,FFTW,F}. 
	Our approach is based on the symmetrization technique called polarization which turned out to be effective in the study of symmetry properties of sign-changing solutions for problems involving local second-order operators of both linear and nonlinear nature, and it follows the ideas developed in \cite{BK} for the $p$-Laplacian. 
	As a remark, we mention that for the $p$-Laplacian, the inequalities $\lambda_2 \leq \lambda_\ominus < \lambda_\circledcirc$ were established in \cite{bendrabgirg} in the planar case $N=2$, and it was proved in \cite{anoop} that $\lambda_2 < \lambda_\circledcirc$ for any $N \geq 2$. Both results yield the nonradiality of the second eigenfunctions of the $p$-Laplacian in a ball (see also \cite{BK}), while the exact shape of the corresponding nodal sets remain obscure.
	
	Although our approach is inspired by \cite{BK}, certain steps in the arguments of \cite{BK} are hardly applicable directly to the nonlocal settings.
	In particular, it is known  
	for the classical Laplacian and $p$-Laplacian that the restriction of any higher eigenfunction to some of its nodal domains is the first eigenfunction of that nodal domain.
	This fact combined with the Hopf maximum principle 
	provides information about the derivative of a second eigenfunction on the boundary of the nodal domain, which was used in \cite{BK}.
	Unfortunately, as far as we know, the same approach does not apply in the nonlocal case of the fractional Laplacian (see the discussion in  \cite[Section 6]{lindgren}), and, moreover, the Courant nodal domain property has not been established even for second eigenfunctions of  the fractional Laplacian on a bounded domain.
	Instead, we significantly use the linearity of the problem \eqref{eq:D} in combination with the information on the structure of its spectrum obtained in \cite{DKK} to prove the following result.
	\begin{theorem}\label{thm}
		Let $N \geq 2$ and $s \in (0,1)$. Then $\lambda_2 = \lambda_\ominus < \lambda_\circledcirc$.
	\end{theorem}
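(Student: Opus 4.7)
I would argue by contradiction, assuming $\lambda_\circledcirc \leq \lambda_\ominus$, and derive a contradiction by constructing an equatorially antisymmetric test function in $H^s_0(B)$ whose Rayleigh quotient is strictly below $\lambda_\circledcirc$. Under the contradictory hypothesis, $\lambda_2 = \lambda_\circledcirc$ admits a radial eigenfunction $u(x) = \phi(|x|)$ with $\phi$ sign-changing exactly once at some $r_0 \in (0,1)$ (by the spectral structure from \cite{DKK}); write $u = u^+ - u^-$ with nonnegative radial parts supported in $\overline{B_{r_0}}$ and $\overline{B} \setminus B_{r_0}$ respectively.

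The first key reduction is that rotational invariance makes the orthogonal splitting of $H^s_0(B)$ into the subspaces of functions symmetric and antisymmetric with respect to the equatorial reflection $\sigma(x) = (x_1,\dots,x_{N-1},-x_N)$ invariant under $(-\Delta)^s$. Consequently the min--max characterization gives
\[
\lambda_\ominus = \inf \bigl\{ [v]_s^2 / \|v\|_2^2 \colon v \in H^s_0(B),\ v \not\equiv 0,\ v \circ \sigma = -v \bigr\},
\]
where $[\cdot]_s$ denotes the Gagliardo seminorm associated with the quadratic form of $(-\Delta)^s$. Hence producing any antisymmetric test function $v$ with $[v]_s^2 < \lambda_\circledcirc \|v\|_2^2$ would contradict $\lambda_\circledcirc \leq \lambda_\ominus$.

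Constructing such a $v$ is the technical heart. Standard polarization applied to $u$ through any hyperplane containing the origin leaves the radial function $u$ unchanged, while polarization through an off-center hyperplane pushes mass outside of $B$ and leaves $H^s_0(B)$. To bypass both issues, I would exploit linearity through the decomposition $u = u^+ - u^-$ and the algebraic identity
\[
[u^+]_s^2 + [u^-]_s^2 + 4 \iint u^+(x) u^-(y) |x-y|^{-N-2s}\, dx\, dy = \lambda_\circledcirc \bigl(\|u^+\|_2^2 + \|u^-\|_2^2\bigr),
\]
derived from the eigenvalue equation together with the disjointness of the supports of $u^\pm$; the cross-kernel integral here is strictly positive. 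A natural candidate $v$ is an antisymmetric combination of translated or polarized copies of $u^\pm$ across a pair of hyperplanes placed symmetrically about the equator, for instance $v(x) := u^+(x - \tau e_N) - u^+(x + \tau e_N)$ with $\tau \in (0, 1 - r_0]$ so that $v \in H^s_0(B)$; by radiality of $u^+$ one automatically has $v \circ \sigma = -v$.

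The Rayleigh quotient of $v$ is then estimated by splitting the Gagliardo double integral over the four blocks $H^+ \times H^+$, $H^- \times H^-$, $H^+ \times H^-$, $H^- \times H^+$, where $H^\pm = \{\pm x_N > 0\}$; radiality of $u^\pm$ identifies the first two, while a strict polarization-type inequality for the fractional seminorm, applied to the nonnegative function $u^+$, produces a strict bound on the mixed block contribution. The main obstacle, and the reason the scheme of \cite{BK} does not transfer verbatim, is that for the nonlocal operator $(-\Delta)^s$ the restriction of $u^+$ to its positivity set $B_{r_0}$ is \emph{not} the first Dirichlet eigenfunction of the subdomain (cf.\ \cite[Section~6]{lindgren}), so the comparison cannot be localized to a nodal domain and must be performed globally on $B$. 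Controlling the long-range cross-interaction terms via the algebraic identity above, together with the strict polarization inequality for the fractional Gagliardo seminorm, is expected to yield $[v]_s^2 < \lambda_\circledcirc \|v\|_2^2$ and complete the contradiction.
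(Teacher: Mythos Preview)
Your proposal has a genuine gap at the decisive step. Granting even the single sign change of the radial eigenfunction (itself not established in \cite{DKK}; as noted in Section~\ref{sec:intro}, the Courant nodal domain property is open for the fractional Laplacian), your test function $v(x)=u^+(x-\tau e_N)-u^+(x+\tau e_N)$ has Rayleigh quotient $\lambda_\circledcirc + (K_\tau - K)/\|u^+\|_2^2$, where $K=-\langle u^-,u^+\rangle$ and $K_\tau=-\langle u^+(\cdot-\tau e_N),u^+(\cdot+\tau e_N)\rangle$. You therefore need
\[
\iint \frac{u^+(x)\,u^+(y)}{|x-y+2\tau e_N|^{N+2s}}\,dx\,dy \;<\; \iint \frac{u^+(x)\,u^-(y)}{|x-y|^{N+2s}}\,dx\,dy,
\]
and nothing in your outline delivers this. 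The two sides compare different products ($u^+\!\times u^+$ versus $u^+\!\times u^-$) against different kernels; no monotonicity or rearrangement principle yields the inequality, and your appeal to a ``strict polarization-type inequality applied to the nonnegative function $u^+$'' is vacuous because $u^+$ is radial and fixed by every polarization through the origin, as you yourself observe.

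The assertion that off-center polarization ``pushes mass outside of $B$ and leaves $H^s_0(B)$'' is the misconception that sends you down this dead end. The paper shows it is false: because $u\ge 0$ in a shell $A_{r,1}$ near $\partial B$ (Lemma~\ref{lem:sign}, via the Pohozaev identity of \cite{ROS}), the polarization $P_a u$ with respect to $\{x_1=a\}$ for any $a\in(0,\tfrac{1-r}{2})$ \emph{does} remain in $X_0^s(B)$. A polarization inequality for the split bilinear forms $\langle u,u^+\rangle$ and $-\langle u,u^-\rangle$ (Lemma~\ref{lem:2}), combined with a characterization of second eigenfunctions through these same split forms (Lemma~\ref{lem:1}), then forces $P_a u$ itself to be a second eigenfunction. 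Since $P_a u$ is nonradial, the contradiction is extracted from the eigenspace description of Proposition~\ref{prop:spectrum}, with an additional argument in the borderline case $\lambda_\circledcirc=\lambda_\ominus$. No kernel comparison of the type you need is ever required.
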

	
	Let us outline the idea of the proof of Theorem \ref{thm} given in Section \ref{sec:proof}. 
	Arguing by contradiction, we suppose that there exists a radial eigenfunction corresponding to $\lambda_2$. 
	Then, properly symmetrizing this eigenfunction by means of polarization (see Section \ref{sec:polarization}), we construct a \textit{nonradial} second eigenfunction of \eqref{eq:D} using a result of Section \ref{sec:second}. 
	Finally, employing the linearity of $(-\Delta)^s$ and the essence of \cite[Proposition 1.1]{DKK} described in Section \ref{sec:eigenvaluesinball}, we show that the nodal set of this alleged eigenfunction has certain contradictory properties. 
	Let us remark that in the proof of Theorem \ref{thm} we use only minimal regularity assumptions on eigenfunctions of \eqref{eq:D} given by \cite[Proposition 4]{Servadei3} and \cite[Theorem 1.1]{ROS} and do not require any information on the unique continuation property.
	The details are as follows.

	\section{Preliminaries}
	In this section, we establish the main ingredients needed for the proof of Theorem~\ref{thm}.  
	Throughout the text, we always denote $\mathbb{N} = \{1,2,\dots\}$ and $\mathbb{N}_0 = \mathbb{N} \cup \{0\}$.
	\subsection{Functional framework}
	Let $\Omega\subset \mathbb{R}^N$, $N\ge 2$, be a bounded open set. 
	For $s\in (0,1)$, we denote by $H^s(\mathbb{R}^N)$ the standard fractional Sobolev space 
	\begin{equation}\label{eq:defH}
		H^s(\mathbb{R}^N) 
		:= 
		\left\{
		u \in L^2(\mathbb{R}^N):~ [u]_{H^{s}(\mathbb{R}^N)}<+\infty
		\right\}
	\end{equation}
	endowed with the norm
	$$
	\|u\|_{H^s(\mathbb{R}^N)} := \|u\|_{L^2(\mathbb{R}^N)} + [u]_{H^{s}(\mathbb{R}^N)},
	$$
	where $\|u\|_{L^2(\mathbb{R}^N)} := \left(\int_{\mathbb{R}^N} |u|^2 \, dx\right)^{1/2}$ and  
	\begin{align*}
		[u]_{H^{s}(\mathbb{R}^N)}
		:=
		\left(\iint_{\mathbb{R}^{N}\times\mathbb{R}^{N}}\frac{|u(x)-u(y)|^{2}}{|x-y|^{N+2s}}\, dxdy
		\right)^{1/2}
	\end{align*}
	is the Gagliardo seminorm.
	Consider now the subspace
	$$
	X_0^s(\Omega) 
	:= 
	\left\{
	u \in H^s(\mathbb{R}^N):~ u=0 ~\text{a.e. in}~ \mathbb{R}^N \setminus \Omega
	\right\}.
	$$
	It is known (see, e.g, \cite[Lemma 7]{Servadei2}) that $X_0^s(\Omega)$ is a Hilbert space with the scalar product 
	\begin{align*}
		\langle u,v \rangle 
		:= 
		\frac{c_{N,s}}{2}
		\iint_{\mathbb{R}^{N}\times\mathbb{R}^{N}}\dfrac{(u(x)-u(y))(v(x)-v(y))}{|x-y|^{N+2s}}\, dxdy
	\end{align*}
	and the associated norm
	\begin{align*}
		\|u\|_{X_0^s(\Omega)}
		:=
		\langle u,u \rangle^{1/2}
		=
		\sqrt{\frac{c_{N,s}}{2}}\, [u]_{H^{s}(\mathbb{R}^N)},
	\end{align*}
	where
	\begin{align*}
		c_{N,s}=\frac{s 2^{2s} \Gamma(\frac{N+2s}{2})}{\pi^\frac{N}{2} \Gamma(1-s)}.
	\end{align*}
	
	We will deal with the weak formulation of the problem \eqref{eq:D0}.
	That is, by a weak solution of \eqref{eq:D0} we mean a function $u \in X_0^s(\Omega)$ such that
	\begin{equation}\label{eq:weak0}
		\left<u,\xi\right> 
		= 
		\lambda \int_\Omega u \,\xi \, dx 
		\quad \text{for all}~ \xi \in X_0^s(\Omega).
	\end{equation}
	We say that $\lambda \in \mathbb{R}$ is an eigenvalue of \eqref{eq:D0} provided there exists a nontrivial function $u \in X_0^s(\Omega)$, called an eigenfunction, such that \eqref{eq:weak0} is satisfied.
	Recall that the set of all eigenvalues is discrete and consists of the sequence 
	$$
	0 < \lambda_1 < \lambda_2 \leq \dots \leq \lambda_k \to +\infty, \quad \mathbb{N}\ni k \to +\infty,
	$$
	see, e.g., \cite[Proposition 9 (d)]{Servadei2}.
	In particular, any $\lambda_k$ has a finite multiplicity and 
	\begin{equation}\label{eq:weak}
		\left<\varphi_k,\xi\right>
		= 
		\lambda_k \int_\Omega \varphi_k \xi \, dx 
		\quad \text{for all}~ \xi \in X_0^s(\Omega),
	\end{equation}
	where $\varphi_k \in X_0^s(\Omega) \setminus \{0\}$ stands for an eigenfunction associated with $\lambda_k$.
	If $\Omega$ is a bounded Lipschitz domain satisfying the exterior ball condition, then $\varphi_k \in C^{0,s}(\overline{\Omega})$, see \cite[Proposition 1.1]{ROS2} in combination with \cite[Proposition 4]{Servadei3}.
	
	Let us also recall that $\varphi_1$ has a constant sign in $\Omega$, while any higher eigenfunction $\varphi_k$ is sign-changing, i.e., $\varphi_k^\pm \not\equiv 0$ in $\Omega$ for any $k \geq 2$.
	Hereinafter, for a function $v$ we denote $v^+ := \max\{v,0\}$ and $v^- := \max\{-v,0\}$, and hence $v=v^+-v^-$. 
	
	\subsection{Second eigenvalue}\label{sec:second}
	The second eigenvalue $\lambda_2$ of \eqref{eq:D0} has the following variational characterization (see, e.g., \cite[Proposition 9 (d)]{Servadei2}):
	\begin{equation}\label{eq:deflambda2}
		\lambda_2 
		= 
		\inf 
		\left\{
		\frac{\left<u,u\right>}{\int_\Omega u^2 \, dx}:~
		u \in X_0^s(\Omega)\setminus \{0\},~ \left<u,\varphi_1\right>=0
		\right\}.
	\end{equation}
	Noting that $\varphi_2^\pm \in X_0^s(\Omega)$, 
	we use $\varphi_2^+$ and $\varphi_2^-$ as test functions in \eqref{eq:weak} with $k=2$ to get
	\begin{equation}\label{eq:-1}
		\lambda_2 \int_\Omega (\varphi_2^+)^2 \, dx
		=
		\left<\varphi_2,\varphi_2^+\right>
		\quad \text{and} \quad
		\lambda_2 \int_\Omega (\varphi_2^-)^2 \, dx
		=
		-\left<\varphi_2,\varphi_2^-\right>.
	\end{equation}
	
	We provide the following result.
	\begin{lemma}\label{lem:1}
		Assume that there exists a function $v \in X_0^s(\Omega)$ such that $v^\pm \not\equiv 0$  in $\Omega$ and 
		\begin{equation}
			\label{eq:1}
			\lambda_2 \int_\Omega (v^+)^2 \, dx 
			\geq 
			\left<v,v^+\right>
			\quad \text{and}\quad 
			\lambda_2 \int_\Omega (v^-)^2 \, dx 
			\geq 
			-\left<v,v^-\right>.
		\end{equation}
		Then $v$ is an eigenfunction  associated with the second eigenvalue $\lambda_2$ of \eqref{eq:D0}, and the equalities hold in \eqref{eq:1}.
	\end{lemma}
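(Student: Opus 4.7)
The plan is to exploit the variational characterization~\eqref{eq:deflambda2} of $\lambda_2$ by testing it against a one-parameter family built from $v^\pm$, with the free parameter ultimately pinned down by a sign property specific to the nonlocal setting. Using $v = v^+ - v^-$ and bilinearity of $\langle\cdot,\cdot\rangle$, the hypothesis~\eqref{eq:1} rewrites equivalently as
\begin{equation*}
  \lambda_2\int_\Omega (v^+)^2\,dx \;\geq\; \|v^+\|_{X_0^s(\Omega)}^2 - \mu,
  \qquad
  \lambda_2\int_\Omega (v^-)^2\,dx \;\geq\; \|v^-\|_{X_0^s(\Omega)}^2 - \mu,
\end{equation*}
where $\mu := \langle v^+, v^-\rangle$.

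The decisive preliminary observation is that $\mu < 0$ strictly. The pointwise integrand $(v^+(x)-v^+(y))(v^-(x)-v^-(y))$ vanishes whenever $v(x)$ and $v(y)$ have the same sign (one of the two factors is then zero), and is strictly negative whenever the two have opposite signs. Since $v^\pm \not\equiv 0$ in $\Omega$, the set $\{(x,y)\in\mathbb{R}^{2N}: v(x) > 0 > v(y)\}$ has positive Lebesgue measure, hence $\mu < 0$. This is the genuinely nonlocal ingredient: the analogous cross term for the local Dirichlet form vanishes identically, and the mechanism below collapses.

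Next, normalize $\varphi_1 > 0$ a.e.\ in $\Omega$ and set $\tau := \bigl(\int_\Omega v^+\varphi_1\,dx\bigr)/\bigl(\int_\Omega v^-\varphi_1\,dx\bigr) > 0$. Define $w := v^+ - \tau v^- \in X_0^s(\Omega)$; then $\int_\Omega w\varphi_1\,dx = 0$, equivalently $\langle w,\varphi_1\rangle = 0$ because $\varphi_1$ is an eigenfunction. So $w$ is admissible in~\eqref{eq:deflambda2} and gives $\langle w,w\rangle \geq \lambda_2\int_\Omega w^2\,dx$. Expanding $\langle w,w\rangle = \|v^+\|_{X_0^s(\Omega)}^2 - 2\tau\mu + \tau^2\|v^-\|_{X_0^s(\Omega)}^2$, using disjointness of the supports of $v^\pm$ to get $\int_\Omega w^2\,dx = \int_\Omega (v^+)^2\,dx + \tau^2\int_\Omega (v^-)^2\,dx$, and inserting the rewritten hypothesis, one ends up after straightforward cancellation with $(1-\tau)^2\mu \geq 0$. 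Combined with $\mu < 0$, this forces $\tau = 1$, i.e.\ $\int_\Omega v\varphi_1\,dx = 0$.

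With $v \perp \varphi_1$ now in hand, $v$ itself is admissible in~\eqref{eq:deflambda2} and yields $\langle v,v\rangle \geq \lambda_2\int_\Omega v^2\,dx$, while summing the two inequalities of~\eqref{eq:1} and using $\int_\Omega v^2\,dx = \int_\Omega (v^+)^2\,dx + \int_\Omega (v^-)^2\,dx$ produces the reverse inequality. Thus $v$ attains the infimum in~\eqref{eq:deflambda2}, and a routine Lagrange multiplier argument identifies $v$ as a weak eigenfunction associated to $\lambda_2$ (the Euler--Lagrange equation $\langle v,\xi\rangle = \lambda_2\int_\Omega v\xi\,dx$ obtained on the tangent space $\{\xi : \langle\xi,\varphi_1\rangle = 0\}$ extends trivially to $\xi=\varphi_1$ since both sides vanish there). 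Equality in the summed inequality, together with each summand in~\eqref{eq:1} being of the form $\geq$, forces equality in both parts of~\eqref{eq:1} separately. The main obstacle of the argument is the strict sign $\mu < 0$: without it the inequality $(1-\tau)^2\mu \geq 0$ is vacuous and the orthogonality $v \perp \varphi_1$ cannot be extracted from the hypothesis.
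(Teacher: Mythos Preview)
Your proof is correct and follows essentially the same strategy as the paper: both construct $w=v^+-\alpha v^-$ orthogonal to $\varphi_1$, combine the variational characterization of $\lambda_2$ with the hypothesis, and exploit the strict negativity of the cross term $\langle v^+,v^-\rangle$ to force $\alpha=1$. The only cosmetic difference is the order of the deductions: you extract the scalar inequality $(1-\tau)^2\mu\ge 0$ directly and conclude $\tau=1$ before identifying $v$ as a minimizer, whereas the paper first shows $v^+-\alpha_0 v^-$ attains $\lambda_2$ and then reads off $\alpha_0=1$ from the equality case in the pointwise estimate.
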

	\begin{proof}
		First, we show that there exists $\alpha_0 > 0$ satisfying $\left<v^+ - \alpha_0 v^-,  \varphi_1\right>=0$.
		For any $\alpha \in \mathbb{R}$, we have
		\begin{equation}\label{eq:lemx1}
			\left<v^+ - \alpha v^-, \varphi_1\right>
			=
			\left<v^+, \varphi_1\right>
			-\alpha
			\left<v^-, \varphi_1\right>.
		\end{equation}
		Using $v^+$ and $v^-$ as test functions in the weak formulation \eqref{eq:weak} with $k=1$, 
		we get
		\begin{equation}\label{eq:lemx2}
			\left<v^\pm,\varphi_1\right>
			=
			\lambda_1
			\int_\Omega v^\pm \varphi_1 \, dx > 0,
		\end{equation}
		where the last inequality follows from the facts that $\varphi_1>0$ in $\Omega$ and $v^\pm$ are nontrivial and nonnegative. 
		Hence, the existence of $\alpha_0>0$ such that $\left<v^+ - \alpha_0 v^-,  \varphi_1\right>=0$ easily follows from \eqref{eq:lemx1} and \eqref{eq:lemx2}.
		
		Thus, $v^+ - \alpha_0 v^- \in X_0^s(\Omega)\setminus \{0\}$ is an admissible function for the variational characterization \eqref{eq:deflambda2} of $\lambda_2$, which yields
		\begin{equation}\label{eq:lambda2<}
			\lambda_2 
			\leq 
			\frac{\left<v^+ - \alpha_0 v^-,v^+ - \alpha_0 v^-\right>}
			{\int_\Omega (v^+)^2 \, dx + \alpha_0^2 \int_\Omega (v^-)^2 \, dx}.
		\end{equation}
		On the other hand, multiplying the second inequality in \eqref{eq:1} by $\alpha_0^2$ and adding it to the first one, we obtain
		$$
		\lambda_2
		\geq 
		\frac{\left<v, v^+ - \alpha_0^2 v^-\right>}{\int_\Omega (v^+)^2 \, dx + \alpha_0^2 \int_\Omega (v^-)^2 \, dx}.
		$$
		Observing that 
		$$
		v(x)-v(y) = (v^+(x)-v^+(y)) - (v^-(x)-v^-(y))
		$$
		and
		\begin{equation}\label{eq:leq0}
		(v^+(x)-v^+(y))(v^-(x)-v^-(y)) \leq 0,
		\end{equation}
		we use the inequality $1+\alpha_0^2 \geq 2\alpha_0$ to get
		\begin{align}
			\notag
			&(v(x)-v(y))(v^+(x)-v^+(y)) - \alpha_0^2 (v(x)-v(y))(v^-(x)-v^-(y))\\
			\notag
			&=
			(v^+(x)-v^+(y))^2
			-(1+\alpha_0^2)(v^+(x)-v^+(y))(v^-(x)-v^-(y))
			+
			\alpha_0^2 (v^-(x)-v^-(y))^2\\
			\notag
			&\geq
			(v^+(x)-v^+(y))^2
			-2\alpha_0(v^+(x)-v^+(y))(v^-(x)-v^-(y))
			+
			\alpha_0^2 (v^-(x)-v^-(y))^2
			\\
			\label{eq:lemsecon2}
			&=
			\left((v^+(x)-v^+(y))-\alpha_0(v^-(x)-v^-(y))\right)^2.
		\end{align}
		Therefore, recalling that
		\begin{align*}
			&\left<v, v^+ - \alpha_0^2 v^-\right>
			\\
			&=
			\frac{c_{N,s}}{2}
			\int_{\mathbb{R}^N}\int_{\mathbb{R}^N} \frac{(v(x)-v(y))(v^+(x)-v^+(y)) - \alpha_0^2 (v(x)-v(y))(v^-(x)-v^-(y))}{|x-y|^{N+2s}} \, dx\,dy,
		\end{align*}
		we deduce from \eqref{eq:lemsecon2} the inequality 
		$$
		\left<v, v^+ - \alpha_0^2 v^-\right> \geq \left<v^+ - \alpha_0 v^-,v^+ - \alpha_0 v^-\right>,
		$$
		which yields
		\begin{equation}\label{eq:lambda2>}
			\lambda_2 
			\geq 
			\frac{\left<v^+ - \alpha_0 v^-,v^+ - \alpha_0 v^-\right>}
			{\int_\Omega (v^+)^2 \, dx + \alpha_0^2 \int_\Omega (v^-)^2 \, dx}.
		\end{equation}
		We conclude from \eqref{eq:lambda2<} and \eqref{eq:lambda2>} that the equality in \eqref{eq:lambda2<} holds, and hence 
		$v^+ - \alpha_0 v^-$ is a minimizer of the right-hand side of the variational characterization \eqref{eq:deflambda2} of $\lambda_2$. 
		At the same time, arguing as in the proof of \cite[Proposition 9]{Servadei2} (see, more precisely, the proof of Claim~1 on p.~2128 in \cite{Servadei2}), one can show that any minimizer of \eqref{eq:deflambda2} is a second eigenfunction of \eqref{eq:D0}.		
		Therefore, $v^+ - \alpha_0 v^-$ is an eigenfunction associated with $\lambda_2$.
		
		We will show that $\alpha_0=1$.
		Thanks to \eqref{eq:lambda2<} and \eqref{eq:lambda2>}, the equality must hold in \eqref{eq:lemsecon2}.
		Taking, for instance, $x \in \text{supp}\, v^-$ and $y \in \text{supp}\, v^+$, we get the strict inequality in \eqref{eq:leq0}, and hence the equality in \eqref{eq:lemsecon2} enforces $\alpha_0=1$.
		Finally, in view of \eqref{eq:-1}, we see that the inequalities in \eqref{eq:1} are actually equalities.
	\end{proof}
	
	\begin{remark}
	As a direct corollary of Lemma \ref{lem:1}, we have the following alternative characterization of the second eigenvalue $\lambda_2$ (cf.\ \eqref{eq:deflambda2} and the characterizations provided in \cite{BP}):
	$$
	\lambda_2 
	=
	\inf 
	\left\{
	\max
	\left\{
	\frac{\left<u,u^+\right>}{\int_\Omega (u^+)^2 \, dx},
	\frac{-\left<u,u^-\right>}{\int_\Omega (u^-)^2 \, dx}
	\right\}:~
	u \in X_0^s(\Omega), ~ u^\pm \not\equiv 0 \text{ in } \Omega
	\right\}. 
	$$
	\end{remark}

	\subsection{Polarization}\label{sec:polarization}
	
	To find a function $v$ which would satisfy the assumptions of Lemma \ref{lem:1}, we will use the symmetrization technique called \textit{polarization}, see, e.g., \cite{BE,brocksol}.
	Taking any $a \in \mathbb{R}$, consider the hyperplane $H_a := \{x \in \mathbb{R}^N: x_1 = a \}$ where $x := (x_1,x_2,\dots,x_N)$, and let $\bar{x} := (2a - x_1, x_2, \dots, x_N)$ be the reflection of $x$ with respect to $H_a$. 
	Denote the half-spaces separated by $H_a$ by
	\begin{equation*}
		\Sigma_a^- := \{ x \in \mathbb{R}^N:~ x_1 \leq a \}
		\quad \text{and} \quad 
		\Sigma_a^+ := \{ x \in \mathbb{R}^N:~ x_1 \geq a \} .
	\end{equation*} 
	Let us fix any $u \in H^{s}(\mathbb{R}^N)$ and define the polarization of $u$ with respect to $H_a$ as a new function $P_a u$ as
	\begin{equation}
		\label{eq:polarization}
		(P_a u)(x) = 
		\left\{
		\begin{aligned}
			&\min \{u(x), u(\bar{x})\}, &&x \in \Sigma_a^+,\\
			&\max \{u(x), u(\bar{x})\}, &&x \in \Sigma_a^-.
		\end{aligned}
		\right.
	\end{equation}
	It is not hard to see that
	\begin{equation}\label{eq:polrefl}
		(P_a(-u))(x) = -(P_a u)(\bar{x}) 
		\quad \text{for any}~ x \in \mathbb{R}^N.
	\end{equation}
	Moreover, there holds
	\begin{equation}\label{eq:3}
		[u]_{H^{s}(\mathbb{R}^N)} \geq [P_a u]_{H^{s}(\mathbb{R}^N)},
		\quad
		\|u^\pm\|_{L^2(\mathbb{R}^N)} = \|(P_a u)^\pm\|_{L^2(\mathbb{R}^N)},
	\end{equation}
	see, e.g., \cite[p.~4818]{BE} or Lemma \ref{lem:2} below for the inequality.
	In particular, it is seen from the definition \eqref{eq:defH} of $H^{s}(\mathbb{R}^N)$ that $P_a u \in H^{s}(\mathbb{R}^N)$.
	The following extension of the inequality in \eqref{eq:3} will be crucial for us.
	\begin{lemma}\label{lem:2}
		Let $u \in H^{s}(\mathbb{R}^N)$ and $a \in \mathbb{R}$. 
		Then
		\begin{equation}\label{eq:5}
			\left<u,u^+\right> \geq \left<P_a u,(P_a u)^+\right>
			\quad \text{and} \quad 
			-\left<u,u^-\right> \geq -\left<P_a u,(P_a u)^-\right>.
		\end{equation}
	\end{lemma}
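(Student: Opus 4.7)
The plan is to prove the first inequality in \eqref{eq:5}; the second then follows by applying the first to $-u$ and invoking \eqref{eq:polrefl} together with the change of variables $x\mapsto\bar x$, $y\mapsto\bar y$, which is an isometry and preserves the kernel $|x-y|^{-N-2s}$.

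Opening the scalar product and splitting the domain of integration into the four pieces $\Sigma_a^{\varepsilon}\times\Sigma_a^{\delta}$, $\varepsilon,\delta\in\{+,-\}$, I would push every piece into $\Sigma_a^+\times\Sigma_a^+$ by the change of variables $y\mapsto\bar y$ or $x\mapsto\bar x$ where needed. With $r_1:=|x-y|$ and $r_2:=|x-\bar y|=|\bar x-y|$, this yields
\begin{equation*}
\langle u,u^+\rangle=\frac{c_{N,s}}{2}\iint_{\Sigma_a^+\times\Sigma_a^+}\!\left\{\frac{f(u(x),u(y))+f(u(\bar x),u(\bar y))}{r_1^{N+2s}}+\frac{f(u(x),u(\bar y))+f(u(\bar x),u(y))}{r_2^{N+2s}}\right\}dx\,dy,
\end{equation*}
with $f(s,t):=(s-t)(s^+-t^+)$, and the analogous identity for $\langle P_a u,(P_a u)^+\rangle$. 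A direct computation using $x_1,y_1\geq a$ gives $r_2^2-r_1^2=4(x_1-a)(y_1-a)\geq 0$. It therefore suffices to prove a pointwise inequality between the two four-term integrands, for a.e.\ $(x,y)\in\Sigma_a^+\times\Sigma_a^+$.

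On each pair $\{u(x),u(\bar x)\}$, the polarization $P_a$ acts either as the identity (if $u(x)\leq u(\bar x)$) or as the swap (if $u(x)>u(\bar x)$), and analogously for $y$. The cases ``no swap'' and ``both swap'' leave the four-term integrand invariant, by the manifest symmetry of the expression under $(a,b,c,d)\mapsto(b,a,d,c)$. The two ``one-sided swap'' cases are symmetric under $(x,y)\leftrightarrow(y,x)$; in the one with $a:=u(x)>u(\bar x)=:b$ and $c:=u(y)\leq u(\bar y)=:d$, the difference between the $u$- and $P_au$-integrands collapses to
\begin{equation*}
\left(\frac{1}{r_1^{N+2s}}-\frac{1}{r_2^{N+2s}}\right)\bigl[f(a,c)+f(b,d)-f(a,d)-f(b,c)\bigr],
\end{equation*}
in which the kernel factor is nonnegative, so everything reduces to showing $D:=f(a,c)+f(b,d)-f(a,d)-f(b,c)\geq 0$ under $a>b$ and $c\leq d$.

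Using the decomposition
\begin{equation*}
f(s,t)=(s-t)(s^+-t^+)=(s^+-t^+)^2+s^+t^-+s^-t^+,
\end{equation*}
a direct expansion reorganizes $D$ as
\begin{equation*}
D=2(a^+-b^+)(d^+-c^+)+(a^+-b^+)(c^--d^-)+(a^--b^-)(c^+-d^+).
\end{equation*}
Since $s\mapsto s^+$ is nondecreasing and $s\mapsto s^-$ is nonincreasing, $a>b$ gives $a^+-b^+\geq 0$ and $a^--b^-\leq 0$, while $c\leq d$ gives $d^+-c^+\geq 0$, $c^--d^-\geq 0$, and $c^+-d^+\leq 0$. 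Thus all three summands are nonnegative, and $D\geq 0$; integrating over $\Sigma_a^+\times\Sigma_a^+$ completes the proof. I expect the main technical obstacle to be precisely this algebraic reduction of $D$: without the decomposition of $f$ into a squared difference plus two bilinear nonnegative terms, it is not transparent why $D\geq 0$, whereas after the decomposition each of the three contributions factors with favorable signs from the monotonicity of $s^\pm$.
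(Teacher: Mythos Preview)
Your proof is correct and follows essentially the same route as the paper: split the double integral into the four quadrants $\Sigma_a^\pm\times\Sigma_a^\pm$, push everything to $\Sigma_a^+\times\Sigma_a^+$, use $|x-y|\leq|x-\bar y|$, handle the four swap/no-swap cases, and derive the second inequality from the first via \eqref{eq:polrefl} and the isometry $x\mapsto\bar x$. The only difference is cosmetic, in the algebra of the one-sided swap case: the paper obtains the two-term factorization $D=(a-b)(d^+-c^+)+(a^+-b^+)(d-c)$ directly, whereas you arrive at the equivalent three-term expression via the decomposition $f(s,t)=(s^+-t^+)^2+s^+t^-+s^-t^+$.
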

	\begin{proof}
		Throughout the proof, we denote $v:=P_a u$.
		We start by establishing the first inequality in \eqref{eq:5}.
		Let us denote $F(s,t) := (s-t)(s^+-t^+)$. Recalling that $\mathbb{R}^N = \Sigma_a^+ \cup \Sigma_a^-$ and using a change of variables, we have
		\begin{align*}
			\int_{\mathbb{R}^N}\int_{\mathbb{R}^N} \frac{F(u(x),u(y))}{|x-y|^{N+2s}} \, dx\,dy
			&=
			\int_{\Sigma_a^+}\int_{\Sigma_a^+} \frac{F(u(x),u(y))}{|x-y|^{N+2s}} \, dx\,dy
			+
			\int_{\Sigma_a^+}\int_{\Sigma_a^+} \frac{F(u(\bar{x}),u(y))}{|\bar{x}-y|^{N+2s}} \, dx\,dy
			\\
			&+
			\int_{\Sigma_a^+}\int_{\Sigma_a^+} \frac{F(u(x),u(\bar{y}))}{|x-\bar{y}|^{N+2s}} \, dx\,dy
			+
			\int_{\Sigma_a^+}\int_{\Sigma_a^+} \frac{F(u(\bar{x}),u(\bar{y}))}{|\bar{x}-\bar{y}|^{N+2s}} \, dx\,dy,
		\end{align*}
		and the same holds with $F(v(x),v(y))$.	
		Thus, to obtain the claimed first inequality in \eqref{eq:5}, it is sufficient to prove that
		\begin{align}
			\notag
			\frac{F(u(x),u(y))}{|x-y|^{N+2s}} &+ \frac{F(u(\bar{x}),u(y))}{|\bar{x}-y|^{N+2s}} +  \frac{F(u(x),u(\bar{y}))}{|x-\bar{y}|^{N+2s}} + \frac{F(u(\bar{x}),u(\bar{y}))}{|\bar{x}-\bar{y}|^{N+2s}}
			\\
			\label{eq:8}
			&\geq
			\frac{F(v(x),v(y))}{|x-y|^{N+2s}} + \frac{F(v(\bar{x}),v(y))}{|\bar{x}-y|^{N+2s}} +  \frac{F(v(x),v(\bar{y}))}{|x-\bar{y}|^{N+2s}} + \frac{F(v(\bar{x}),v(\bar{y}))}{|\bar{x}-\bar{y}|^{N+2s}}
		\end{align}
		for all $x, y \in \Sigma_a^+$. 
		To prove \eqref{eq:8}, let us first notice that for every $x, y \in \Sigma_a^+$, we have
		\begin{equation}\label{eq:7}
			\frac{1}{|x-y|^{N+2s}} = \frac{1}{|\bar{x}-\bar{y}|^{N+2s}} \geq \frac{1}{|\bar{x}-y|^{N+2s}} = \frac{1}{|x-\bar{y}|^{N+2s}},
		\end{equation}
		as it can be deduced from the definition of $\bar{x}$ and $\bar{y}$. 
		
		We prove \eqref{eq:8} in the following four separate cases:
		\begin{enumerate}
			\item Assume that $x, y \in \Sigma_a^+$ are such that 
			$$
			u(\bar{x}) \geq u(x) 
			\quad \text{and} \quad 
			u(\bar{y}) \geq u(y).
			$$
			Then, by the definition \eqref{eq:polarization} of polarization, we have $v = u$, and hence \eqref{eq:8} becomes an equality.
			
			\item Assume that $x, y \in \Sigma_a^+$ are such that 
			$$
			u(\bar{x}) \leq u(x) 
			\quad \text{and} \quad 
			u(\bar{y}) \leq u(y).
			$$
			Then $v(x)=u(\bar{x})$, $v(\bar{x})=u(x)$, $v(y)=u(\bar{y})$, $v(\bar{y})=u(y)$, and hence the left-hand side of \eqref{eq:8} can be written as
			\begin{align*}
				\frac{F(u(x),u(y))}{|x-y|^{N+2s}} &+ \frac{F(u(\bar{x}),u(y))}{|\bar{x}-y|^{N+2s}} + \frac{F(u(x),u(\bar{y}))}{|x-\bar{y}|^{N+2s}} + \frac{F(u(\bar{x}),u(\bar{y}))}{|\bar{x}-\bar{y}|^{N+2s}}
				\\
				& \stackrel{\eqref{eq:7}}{=}
				\frac{F(v(\bar{x}),v(\bar{y}))}{|\bar{x}-\bar{y}|^{N+2s}}+ \frac{F(v(\bar{x}),v(y))}{|\bar{x}-y|^{N+2s}} +  \frac{F(v(x),v(\bar{y}))}{|x-\bar{y}|^{N+2s}} + \frac{F(v(x),v(y))}{|x-y|^{N+2s}}.
			\end{align*}
			That is, \eqref{eq:8} turns out to be an equality in this case as well.
			
			\item Assume that $x, y \in \Sigma_a^+$ are such that 
			$$
			u(\bar{x}) \geq u(x) 
			\quad \text{and} \quad 
			u(\bar{y}) \leq u(y).
			$$
			Then $v(x)=u(x)$, $v(\bar{x})=u(\bar{x})$, $v(y)=u(\bar{y})$, $v(\bar{y})=u(y)$, and hence \eqref{eq:8} can be written as
			\begin{align}
			\notag
				\frac{F(u(x),u(y))}{|x-y|^{N+2s}} &+ \frac{F(u(\bar{x}),u(y))}{|\bar{x}-y|^{N+2s}} +  \frac{F(u(x),u(\bar{y}))}{|x-\bar{y}|^{N+2s}} + \frac{F(u(\bar{x}),u(\bar{y}))}{|\bar{x}-\bar{y}|^{N+2s}}
				\\
				\label{eq:FFFF}
				&\geq	
				\frac{F(u(x),u(\bar{y}))}{|x-y|^{N+2s}} + \frac{F(u(\bar{x}),u(\bar{y}))}{|\bar{x}-y|^{N+2s}} +  \frac{F(u(x),u(y))}{|x-\bar{y}|^{N+2s}} + \frac{F(u(\bar{x}),u(y))}{|\bar{x}-\bar{y}|^{N+2s}}.
			\end{align}
			In view of \eqref{eq:7}, we have
			$$
			\frac{1}{|x-y|^{N+2s}} 
			-
			\frac{1}{|x-\bar{y}|^{N+2s}}
			= 
			\frac{1}{|\bar{x}-\bar{y}|^{N+2s}} 
			-
			\frac{1}{|\bar{x}-y|^{N+2s}} \geq 0,
			$$
			and hence, by rearranging the terms in   \eqref{eq:FFFF}, we see that \eqref{eq:FFFF} is satisfied provided
			\begin{align*}
				F(u(x),u(y)) - F(u(\bar{x}),u(y)) - F(u(x),u(\bar{y})) + F(u(\bar{x}),u(\bar{y}))
				\geq 0.
			\end{align*}
			Rewriting $F$ in the original form, opening the brackets, and making standard simplifications, we arrive at
			\begin{align*}
				\notag
				&F(u(x),u(y)) - F(u(\bar{x}),u(y)) - F(u(x),u(\bar{y})) + F(u(\bar{x}),u(\bar{y}))\\
				&=\underbrace{(u(x)-u(\bar{x}))}_{\leq 0} \underbrace{(u^+(\bar{y})-u^+(y))}_{\leq 0} + \underbrace{(u^+(x)-u^+(\bar{x}))}_{\leq 0} \underbrace{(u(\bar{y})-u(y))}_{\leq 0} \geq 0.
			\end{align*}
			Hence, the desired inequality \eqref{eq:8} holds true.
			\item Assume that $x, y \in \Sigma_a^+$ are such that 
			$$
			u(\bar{x}) \leq u(x) 
			\quad \text{and} \quad 
			u(\bar{y}) \geq u(y).
			$$
			Switching the notations $x \leftrightarrow y$, we arrive at the previous case, and hence \eqref{eq:8} is satisfied.
		\end{enumerate}
		Thus, we have shown that \eqref{eq:8} is satisfied for all $x, y \in \Sigma_a^+$, which proves the first inequality in \eqref{eq:5}.
		
		In order to justify the second inequality in \eqref{eq:5}, we first write
		\begin{equation}\label{eq:pol2}
			-\left<u,u^-\right> = \left<-u,(-u)^+\right> \geq \left<P_a (-u),(P_a (-u))^+\right>,
		\end{equation}
		where the inequality is given by the first inequality in \eqref{eq:5} with $-u$ instead of $u$.
		Using \eqref{eq:polrefl} and the fact that $|x-y|=|\bar{x}-\bar{y}|$, we see that
		\begin{align*}
			&\left<P_a (-u),(P_a (-u))^+\right> 
			\\
			&=
			\frac{c_{N,s}}{2}
			\iint_{\mathbb{R}^{N}\times\mathbb{R}^{N}}\frac{(P_a (-u)(x)-P_a (-u)(y))((P_a (-u)(x))^+-(P_a (-u)(y))^+)}{|x-y|^{N+2s}}\, dxdy
			\\
			&=
			\frac{c_{N,s}}{2}
			\iint_{\mathbb{R}^{N}\times\mathbb{R}^{N}}\frac{(-P_a u(\bar{x})+P_a u(\bar{y}))((-P_a u(\bar{x}))^+-(-P_a u(\bar{y}))^+)}{|\bar{x}-\bar{y}|^{N+2s}}\, dxdy
			\\
			&=
			-\frac{c_{N,s}}{2}
			\iint_{\mathbb{R}^{N}\times\mathbb{R}^{N}}\frac{(P_a u(\bar{x})-P_a u(\bar{y}))((P_a u(\bar{x}))^--(P_a u(\bar{y}))^-)}{|\bar{x}-\bar{y}|^{N+2s}}\, dxdy
			\\
			&=
			-\frac{c_{N,s}}{2}
			\iint_{\mathbb{R}^{N}\times\mathbb{R}^{N}}\frac{(P_a u(\bar{x})-P_a u(\bar{y}))((P_a u(\bar{x}))^--(P_a u(\bar{y}))^-)}{|\bar{x}-\bar{y}|^{N+2s}}\, d\bar{x}d\bar{y}
			\\
			&= 
			-\left<P_a u,(P_a u)^-\right>,
		\end{align*}
		where we made the change of variables $\bar{x} = \bar{x}(x)$ and $\bar{y} = \bar{y}(y)$ which is a linear isometry.
		Combining this with \eqref{eq:pol2}, we finish the proof.
	\end{proof}

	\subsection{Eigenvalues in the ball}\label{sec:eigenvaluesinball}
	The results of three previous subsections were formulated for the problem \eqref{eq:D0} in a general $\Omega$.
	In this subsection, we discuss certain specific properties of the problem \eqref{eq:D} in the  ball. 
	
	Let us denote by $\{\varphi_{N,n}(|x|)\}_{n \in \mathbb{N}}$ and $\{\lambda_{N,n}\}_{n \in \mathbb{N}}$ the sequences of all \textit{radial} basis eigenfunctions and corresponding eigenvalues for the problem \eqref{eq:D} in the  $N$-dimensional open unit ball $B$. 
	We assume that $\{\lambda_{N,n}\}_{n \in \mathbb{N}}$ is naturally arranged in the nondecreasing order with respect to $n$, that is, for a fixed dimension $N$,
	\begin{equation}\label{eq:ord-n}
		\lambda_{N,1} 
		< 
		\lambda_{N,2}
		\leq
		\lambda_{N,3}
		\leq
		\dots,
	\end{equation}
	where the first inequality is strict since, evidently, $\lambda_1 = \lambda_{N,1}$, and $\lambda_1$ is simple (see, e.g., \cite[Proposition 9 (c)]{Servadei2}).
	Moreover, each $\lambda_{N,k}$ has a finite multiplicity and $\lambda_{N,k} \to +\infty$ as $k \to +\infty$.

	Let us also denote by $H_{l}$ the space of all homogeneous harmonic polynomials in $N$ variables and of degree $l \in \mathbb{N}_0$.
	It is known that the dimension of $H_l$ is given by the formula
	$$
	M_{l} := \text{dim}\, H_l
	=
	\binom{l+N-1}{N-1} - \binom{l+N-3}{N-1},
	$$
	see, e.g., \cite[Proposition 5.8]{axler}.
	Notice that $M_0 = 1$ and $M_1 = N$.
	Denote by $\{V_{l,m}\}_{m=1}^{M_l}$ an orthogonal basis of $H_l$.
	In particular, the basis of $H_1$ can be chosen as $\{x_1,\dots,x_N\}$. 
	
	The structure of eigenfunctions and eigenvalues of the problem \eqref{eq:D} in $B$ is described in \cite[Proposition 1.1]{DKK}.
	For reader's convenience, we recall its statement below.  
	Notice that in contrast with \cite{DKK} we start the index counting from $1$ rather than from $0$ to preserve the uniformity of notations.
	\begin{proposition}{\cite[Proposition 1.1]{DKK}}\label{prop:spectrum}
		The functions $V_{l,m}(x) \varphi_{N+2l,n}(|x|)$ with $l \in \mathbb{N}_0$, $1 \leq m \leq M_{l}$, and $n \in \mathbb{N}$ form a complete orthogonal system of eigenfunctions of \eqref{eq:D} in the unit $N$-dimensional ball $B$.
		The eigenfunction  $V_{l,m}(x) \varphi_{N+2l,n}(|x|)$ corresponds to the eigenvalue $\lambda_{N+2l,n}$.
	\end{proposition}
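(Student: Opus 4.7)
The plan is to exploit the rotational invariance of $(-\Delta)^s$, which is immediate from the $O(N)$-invariance of its integral kernel, together with the standard decomposition of $L^2(S^{N-1})$ into spherical harmonics. Because \eqref{eq:D} is rotationally invariant, each finite-dimensional eigenspace is an orthogonal representation of $O(N)$ and so decomposes into pieces associated to the spaces $H_l$ of homogeneous harmonic polynomials. This reduces the search for eigenfunctions to the product ansatz $u(x) = V_{l,m}(x)\,g(|x|)$, with $g$ a radial profile vanishing outside $[0,1]$.

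The key technical ingredient is a nonlocal Bochner-type intertwining identity: for every $V_{l,m}\in H_l$ and every admissible radial profile $g$,
\begin{equation*}
(-\Delta)^s\bigl(V_{l,m}(x)\, g(|x|)\bigr) \;=\; V_{l,m}(x)\,\mathcal{L}^{s}_{N+2l}\, g(|x|),
\end{equation*}
where $\mathcal{L}^{s}_{N+2l}$ denotes the radial part of the fractional Laplacian in dimension $N+2l$. I would derive this identity directly from the singular-integral definition: after the change of variables $y = x + r\omega$ with $\omega \in S^{N-1}$, I would expand $V_{l,m}(x+r\omega)$ and perform the angular integration using the Funk--Hecke formula, exploiting that $V_{l,m}$ is harmonic and homogeneous of degree $l$. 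The factor $V_{l,m}(x)$ then pulls out, and careful bookkeeping of the constants identifies the surviving radial integral operator as precisely the fractional Laplacian on radial functions in the shifted dimension $N+2l$. This is the nonlocal analogue of the classical identity $\Delta\bigl(V_{l,m}(x)g(|x|)\bigr) = V_{l,m}(x)\bigl(g''(r) + \tfrac{N+2l-1}{r}g'(r)\bigr)$.

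Granted the intertwining, each candidate $V_{l,m}(x)\varphi_{N+2l,n}(|x|)$ is an eigenfunction of \eqref{eq:D} with eigenvalue $\lambda_{N+2l,n}$, since by definition $\varphi_{N+2l,n}$ solves $\mathcal{L}^{s}_{N+2l}\varphi_{N+2l,n} = \lambda_{N+2l,n}\varphi_{N+2l,n}$ on the unit ball of $\mathbb{R}^{N+2l}$. Orthogonality in $L^2(B)$ splits into two ingredients: writing $dx = \rho^{N-1}d\rho\,d\sigma(\omega)$ and integrating first in $\omega$, pairs $(l,m)\neq(l',m')$ give zero by the orthogonality of $\{V_{l,m}|_{S^{N-1}}\}$ in $L^2(S^{N-1})$, whereas the same $(l,m)$ with $n \neq n'$ give zero by the orthogonality of $\{\varphi_{N+2l,n}\}_n$ in the radial $L^2$ space attached to dimension $N+2l$ (a consequence of self-adjointness and, within a single eigenspace, of a Gram--Schmidt choice). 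Completeness is then obtained by combining: (i) the completeness of the spherical-harmonic expansion $u(x) = \sum_{l,m} u_{l,m}(|x|)\, V_{l,m}(x)/|x|^l$ in $L^2(B)$; and (ii) the completeness of $\{\varphi_{N+2l,n}\}_n$ in the radial $L^2$-space of the unit ball of $\mathbb{R}^{N+2l}$, which is the content of the general spectral theorem already invoked via \cite[Proposition 9]{Servadei2}.

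The main obstacle is the rigorous derivation of the intertwining formula. The Funk--Hecke-based computation is formal, and one has to justify the interchange of the principal-value limit with the angular integration, control the behaviour of the kernel near the origin (using that $V_{l,m}$ vanishes to order $l$ there), and verify decay at infinity sufficient for the singular integral to make sense in a pointwise or distributional sense. For the class of functions at hand this is tractable because the $C^{0,s}(\overline{B})$ regularity of eigenfunctions from \cite[Proposition 1.1]{ROS2} combined with \cite[Proposition 4]{Servadei3} provides enough regularity for the calculation, while the symmetric factorization of the integrand via $y \mapsto x + r\omega$ and $y \mapsto x - r\omega$ removes the apparent singularity in the usual way.
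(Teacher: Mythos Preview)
The paper does not prove Proposition~\ref{prop:spectrum}: it is quoted verbatim from \cite[Proposition~1.1]{DKK} and used as a black box. So there is no ``paper's own proof'' to compare against; the relevant benchmark is the original argument in \cite{DKK}.

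Your sketch is in fact close in spirit to what \cite{DKK} do. They also reduce to the product ansatz $V_{l,m}(x)\,g(|x|)$ via rotational invariance and prove the intertwining identity you call the ``nonlocal Bochner-type'' relation, identifying the radial operator with the fractional Laplacian on radial functions in dimension $N+2l$; the result is then assembled exactly as you describe. Your proposed derivation via Funk--Hecke is one legitimate route to that identity, though \cite{DKK} work instead through an explicit Mellin/hypergeometric representation of the radial kernel (Meijer $G$-functions), which has the advantage of yielding closed-form constants without delicate principal-value manipulations. Either approach is acceptable.

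Two small remarks on your sketch. First, in the orthogonality step for fixed $(l,m)$ and $n\neq n'$, you should make explicit that $V_{l,m}(x)=|x|^l Y_{l,m}(x/|x|)$ contributes the factor $r^{2l}$ to the radial integral, so that $\int_0^1 \varphi_{N+2l,n}\,\varphi_{N+2l,n'}\,r^{N-1}r^{2l}\,dr$ is precisely the $L^2$-pairing with the radial weight of dimension $N+2l$; this is why orthogonality of the $\{\varphi_{N+2l,n}\}_n$ in the higher-dimensional ball transfers back. Second, the regularity input you invoke (\cite{ROS2}, \cite{Servadei3}) is about eigenfunctions once they are known to exist; for the intertwining identity itself you need it to hold on a class large enough to include the products $V_{l,m}\,g$ \emph{before} knowing they are eigenfunctions, so the justification should proceed on, say, smooth compactly supported radial $g$ and then pass to the limit, rather than bootstrapping from eigenfunction regularity.
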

	
	As a consequence of Proposition \ref{prop:spectrum}, we have $\{\lambda_k\}_{k \in \mathbb{N}} = \{\lambda_{N+2l,n}\}_{l \in \mathbb{N}_0, n \in \mathbb{N}}$, and any  $\lambda_{N+2l,n}$ is counted in the spectrum of \eqref{eq:D} with the multiplicity $M_l$.
	In particular, if $\lambda_k = \lambda_{N+2l,n}$ for some $k,l,n$, then $\lambda_k$ has the multiplicity 
	\textit{at least} $M_{l}$.
	Let us remark that in the local case $s=1$ the multiplicity of such  $\lambda_k$ is precisely $M_{l}$. This fact is known as Bourget's hypothesis (see, e.g., \cite[Section 15.28]{watson}), and its proof, based on fine properties of Bessel functions, can be found in \cite{Siegel}. 
	Up to our knowledge, the same problem for the fractional Laplacian is open (except for the case of $\lambda_1$ and $\lambda_2$). 
	
	It was proved in \cite[p.~509]{DKK} that 
	\begin{equation}\label{eq:ord-N}
		\lambda_{N,1} 
		< 
		\lambda_{N+1,1}
		<
		\lambda_{N+2,1}
		<
		\dots
	\end{equation}
	Therefore, recalling that $\lambda_1 = \lambda_{N,1}$ and $\lambda_1$ is simple, we deduce from the orderings \eqref{eq:ord-n} and \eqref{eq:ord-N} that 
	$$
	\lambda_2 
	= 
	\min\{\lambda_{N+2,1},\lambda_{N,2}\}
	\equiv
	\min\{\lambda_\ominus,\lambda_\circledcirc\}.
	$$
	Here, the schematic graphical notations $\lambda_\ominus \equiv \lambda_{N+2,1}$ and $\lambda_\circledcirc \equiv \lambda_{N,2}$ 
	are consistent with Proposition \ref{prop:spectrum} and were introduced in Section \ref{sec:intro}.
	
	Finally, we will need the following result.
	\begin{lemma}\label{lem:sign}
		Let $u \in X_0^s(B) \setminus \{0\}$ be a radial eigenfunction of \eqref{eq:D}. 
		Then there exists $\rho \in (0,1)$ such that either $u>0$ or $u<0$ in the spherical shell with inner radius $\rho$ and outer radius $1$.
	\end{lemma}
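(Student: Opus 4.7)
The plan is to reduce the statement to showing that the continuous radial profile of $u/d^s$, with $d(x) := 1-|x|$, does not vanish at $r=1$, and then to exploit the structural decomposition of the radial spectrum afforded by Proposition \ref{prop:spectrum}.

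First, any weak eigenfunction of \eqref{eq:D} is bounded by \cite[Proposition 4]{Servadei3}, so $\lambda u \in L^\infty(B)$ and \cite[Theorem 1.1]{ROS} provides a continuous extension $h := u/d^s \in C^{0,\alpha}(\overline{B})$. Since $u$ and $d$ are radial, so is $h$; I write $h(x) = H(|x|)$ for some $H \in C([0,1])$. If $H(1) \neq 0$, continuity of $H$ furnishes $\rho \in (0,1)$ on which $H$ is of constant sign, and because $d^s > 0$ on the shell $\{\rho < |x| < 1\}$, the eigenfunction $u = h \cdot d^s$ takes that same sign on the shell, establishing the lemma.

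The main obstacle is therefore to show $H(1) \neq 0$, since the standard fractional Hopf lemma does not apply directly to sign-changing eigenfunctions. My plan is to combine Proposition \ref{prop:spectrum}, which identifies any radial eigenfunction of \eqref{eq:D} with eigenvalue $\lambda$ as an element of the finite-dimensional span of basis radial functions $\varphi_{N,n}$ satisfying $\lambda_{N,n} = \lambda$, with the simplicity of the radial subspectrum expected from the Sturm-Liouville-type structure of the radial restriction of the fractional Laplacian: this reduces $u$ to a nonzero scalar multiple of a single $\varphi_{N,n}$, for which the boundary asymptotics established in \cite{DKK} yield $H(1) \neq 0$. A more self-contained alternative would be to use the Green-function representation $u(x) = \lambda \int_B G(x,y) u(y)\, dy$ together with the closed-form boundary expansion $G(x,y)/d(x)^s \to K(y)$ as $x \to x_0 \in \partial B$, which by radial symmetry identifies $H(1)$ as an integral of $u$ against a positive radial kernel; the delicate point is then to exclude the vanishing of this integral on a nontrivial radial eigenfunction without appealing to the unique continuation property that the introduction explicitly avoids.
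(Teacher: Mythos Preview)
Your reduction to showing $H(1)\neq 0$ is correct and coincides with the paper's opening move. The gap is in how you propose to establish $H(1)\neq 0$. You invoke ``the simplicity of the radial subspectrum expected from the Sturm--Liouville-type structure'' to reduce $u$ to a scalar multiple of a single $\varphi_{N,n}$. That simplicity is \emph{not} known for the fractional Laplacian: the paper itself remarks, immediately after Proposition~\ref{prop:spectrum}, that the analogue of Bourget's hypothesis (which in the classical case $s=1$ supplies exactly this simplicity) is open for $s\in(0,1)$. So your main route rests on an unproven assumption, and the word ``expected'' in your own sentence flags this. Even granting simplicity, the claim that \cite{DKK} provides nonvanishing boundary asymptotics for each individual $\varphi_{N,n}$ would need a precise citation; nothing of the sort is used elsewhere in the paper. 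Your Green-function alternative remains a sketch, and you yourself identify the unresolved obstruction: ruling out the vanishing of that boundary integral without unique continuation is precisely the hard step.

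The paper bypasses all of this with the Pohozaev identity of \cite[Theorem~1.1]{ROS}. For an eigenfunction $u$ with eigenvalue $\lambda_k$, that identity carries a boundary term proportional to $\int_{\partial B}(u/\delta^s)^2\,(x\cdot\nu)\,d\sigma$; when $u/\delta^s=0$ on $\partial B$ it collapses to
\[
(2s-N)\,\lambda_k\int_B u^2\,dx + N\,\lambda_k\int_B u^2\,dx = 0,
\]
i.e.\ $2s\,\lambda_k\int_B u^2\,dx=0$, forcing $u\equiv 0$. This delivers $H(1)\neq 0$ directly, with no appeal to spectral simplicity, to unique continuation, or to the fine structure of the basis eigenfunctions.
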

	\begin{proof}
		Suppose, by contradiction, that there exist a radial eigenfunction $u$ associated with some eigenvalue $\lambda_k$ ($k \geq 2$) and a sequence $\{\rho_n\}_{n \in \mathbb{N}} \subset (0,1)$ satisfying $\rho_n \nearrow 1$ such that $u(x) = 0$ for any $x \in B$ with $|x| = \rho_n$. 
		Letting $\delta(x)=\text{dist}(x,\partial B)$, we have $u(x)/ \delta^{s}(x)=0$ for any $x \in B$ with $|x| = \rho_n$.		
		We know from \cite[Theorem 1.1]{ROS} that 
		$u/ \delta^{s}|_{B}$ admits a continuous extension to $\overline{B}$, denoted by $u/ \delta^{s}$, which implies that $u/ \delta^{s}=0$ on $\partial B$.
		Hence, applying the Pohozaev identity from \cite[Theorem 1.1]{ROS}, we deduce that $u$ satisfies
		$$
		(2s-N) \lambda_k \int_B u^2 \, dx + N  \lambda_k \int_B u^2 \, dx = 0,
		$$
		which yields $u \equiv 0$ in $B$, a contradiction.
	\end{proof}

	\section{Proof of Theorem \ref{thm}}\label{sec:proof}
	Throughout the proof, we denote by $A_{r_1,r_2}$ the open spherical shell centred at the origin and having the inner radius $r_1$ and outer radius $r_2$, where $r_2>r_1>0$.
	
	Suppose, by contradiction, that $\lambda_2 = \lambda_\circledcirc$. 
	Let $u \in X_0^s(B) \setminus \{0\}$ \label{choice-of-u} be an arbitrary radially symmetric second eigenfunction of \eqref{eq:D}. 
	Let us introduce
	\begin{equation}\label{eq:r}
		r = r(u) 
		:=
		\inf 
		\{\varrho \in [0,1]:\, 
		u(x) \geq 0 \text{ for any } x \text{ such that } |x| \in [\varrho,1]\}.
	\end{equation}
	In view of Lemma \ref{lem:sign} and since $u$ is sign-changing, we can assume, without loss of generality, that $u>0$ in $A_{\rho,1}$ and $u=0$ on $\partial B_\rho$ for some $\rho \in (0,1)$, that is, $r \in (0,\rho]$. 
	Such $\rho$ will be used below.
	
	Let us show that  $P_a u \in X_0^s(B)$ for any $a \in (0, \frac{1-r}{2})$.
	To this end, in view of the inequality in \eqref{eq:3}, it is enough to prove that $(P_a u)(x) = 0$ for any $x \in \mathbb{R}^N \setminus B$.    
	Assume first that $x \in (\mathbb{R}^N \setminus B) \cap \Sigma_a^-$, i.e., $|x| \geq 1$ and $x_1 \leq a$. 
	In particular, we have $u(x)=0$ for such $x$.
	Then $\bar{x}$, the reflection of $x$ with respect to the hyperplane $H_a$, satisfies 
	$$
	|\bar{x}|^2 
	= 
	(2a-x_1)^2 + x_2^2 + \dots + x_N^2
	=
	|x|^2 + 4a(a-x_1) \geq |x|^2 \geq 1,
	$$
	that is, $\bar{x} \in (\mathbb{R}^N \setminus B) \cap \Sigma_a^+$, and thus $u(\bar{x}) = 0$.
	This yields $(P_a u)(x) = \max\{u(x), u(\bar{x})\}  = 0$. 
	Assume now that $x \in (\mathbb{R}^N \setminus B) \cap \Sigma_a^+$, i.e., $|x| \geq 1$ and $x_1 \geq a$. 
	Then we have
	$$
	|\bar{x}|^2 
	= 
	|x|^2 + 4a(a-x_1)
	\geq
	|x|^2 + 4a^2 - 4a|x| 
	= 
	(|x|-2a)^2
	> 
	r^2, 
	$$
	since $a \in (0, \frac{1-r}{2})$.
	Recalling that $u \geq 0$ in $A_{r,1}$, we conclude that $u(\bar{x}) \geq 0$.
	This implies that $(P_a u)(x) = \min\{u(x), u(\bar{x})\} = 0$.
	Finally, combining both cases, we deduce that $(P_a u)(x) =  0$ for any $x \in \mathbb{R}^N \setminus B$ provided $a \in (0, \frac{1-r}{2})$, and hence $P_a u \in X_0^s(B)$, cf.\ Figure \ref{fig:2}. 
	
	Taking now any $a \in (0, \frac{1-r}{2})$ and recalling that $u$ satisfies the equalities \eqref{eq:-1}, we use the inequalities \eqref{eq:5} of Lemma \ref{lem:2} together with the equality in \eqref{eq:3} in order to deduce that $P_a u$ satisfies the assumptions \eqref{eq:1}  of Lemma \ref{lem:1}.
	That is, $P_a u$ is also a second eigenfunction of the problem \eqref{eq:D}. 
	Notice that $P_a u$ is rotationally invariant with respect to the $x_1$-axis, as it follows from the definition of polarization and the radial symmetry of $u$.
	However, $P_a u$ is not radially symmetric at least if $a \in (0, \frac{1-\rho}{2})$.
	Indeed, recalling that $u>0$ in $A_{\rho,1}$ and $u=0$ on $\partial B_\rho$, and taking $x^* = (\rho + 2a,0,\dots,0) \in A_{\rho,1} \cap \Sigma_a^+$, we see that
	\begin{equation}\label{eq:nonradP}
	(P_a u)(x^*) = \min\{u(x^*), u(\overline{x^*})\} = 0,
	\quad
	(P_a u)(-x^{*}) = \max\{u(-x^{*}), u(\overline{-x^{*}})\} > 0,
	\end{equation}
	and so $P_a u$ is nonradial.
	
	From now on, we assume that $a \in (0, \frac{1-\rho}{2})$ is fixed.
	Let us prove that $P_a u$ \textit{cannot} be a second eigenfunction for such $a$.
	By Proposition~\ref{prop:spectrum} (see the discussion thereafter), we have to distinguish two cases:
	
	\textit{Case I.} 
	$\lambda_2 = \lambda_\circledcirc < \lambda_\ominus$. 
	In this case, there exists $k \geq 1$ such that
	$$
	\lambda_2 = \lambda_{N,2} = \dots = \lambda_{N,k+1} < \min\{\lambda_{N,k+2},\lambda_{N+2,1}\}.
	$$
	That is, the eigenspace $ES(\lambda_2)$ of  $\lambda_2$ is generated by $k$ linearly independent radial eigenfunctions, which implies that any second eigenfunction is radial. 
	But this is impossible since the second eigenfunction $P_a u$ is nonradial.

	\textit{Case II.} $\lambda_2 = \lambda_\circledcirc = \lambda_\ominus$.
	In this case, there exists $k \geq 1$ such that
	\begin{equation*}\label{eq:caseII}
		\lambda_2 = \lambda_{N,2} = \dots = \lambda_{N,k+1} 
		=
		\lambda_{N+2,1} < 	
		\min\{\lambda_{N,k+2},\lambda_{N+2,2}, 
		\lambda_{N+4,1}\}.
	\end{equation*}
	Since $\lambda_{N+2,1}$ is counted in the spectrum of \eqref{eq:D} with multiplicity $N$, we conclude that 
	the multiplicity of $\lambda_2$ is $k+N$, and 
	$$
	ES(\lambda_2) = \text{span}\{u_1,\dots,u_k, \xi_1,\dots,\xi_N\},
	$$ 
	where each $u_i$ is a radial eigenfunction, and each $\xi_i = x_i \varphi_{N+2,1}(|x|)$ is the anti-symmetric eigenfunction associated with $\lambda_\ominus$. 
	In particular, recalling that $\varphi_{N+2,1}(|x|)>0$ for $|x| \in [0,1)$, we see that the nodal set of $\xi_i$ is the intersection of the ball $B$ with the hyperplane $\{x_i=0\}$.

		\begin{figure}[h]
		\hspace*{\fill}
		\begin{minipage}[t]{0.48\linewidth}
			\centering
			\vspace{0pt}
			\includegraphics[width=0.75\linewidth]{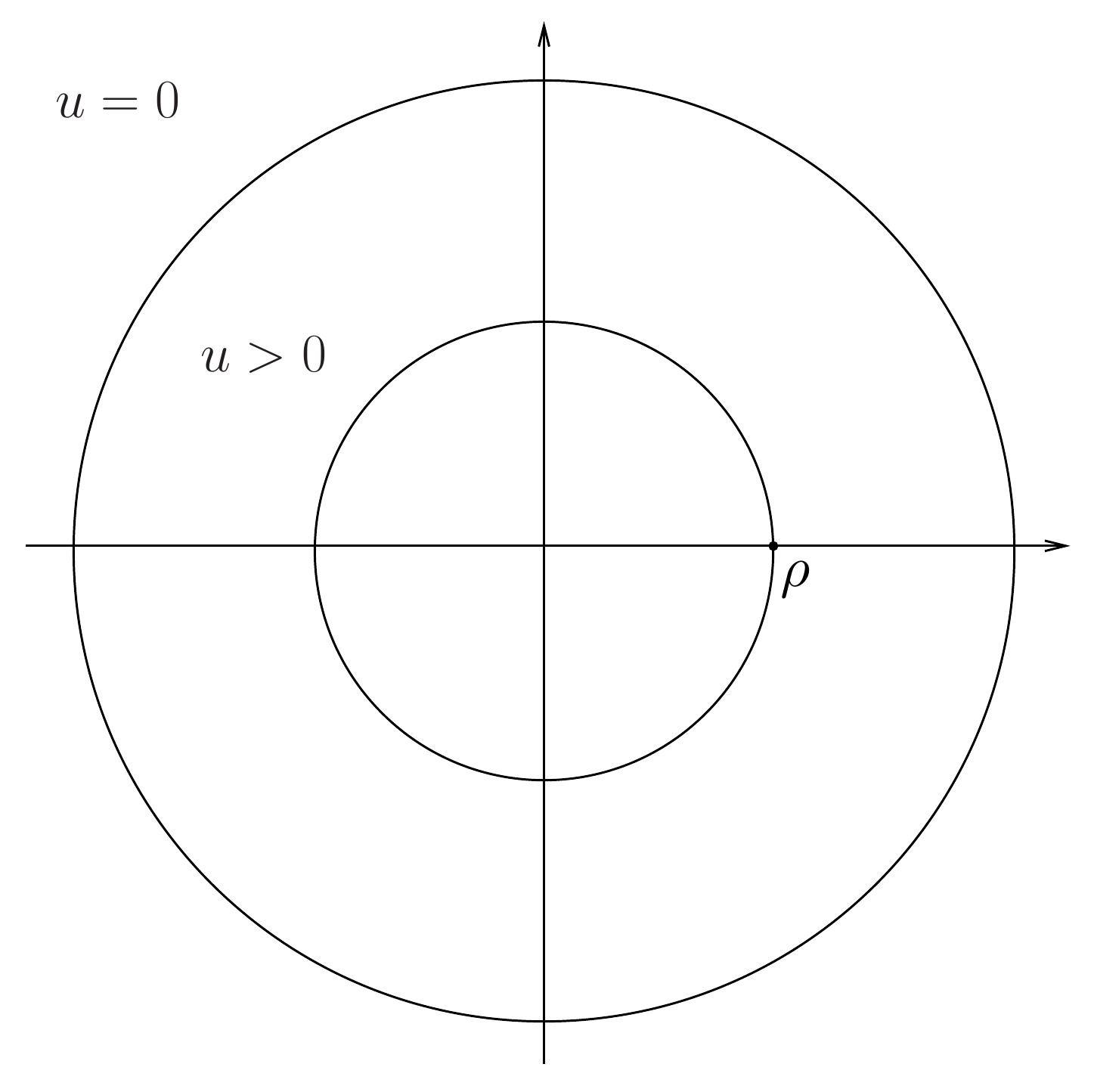}
			\caption{A radial eigenfunction $u$.}
			\label{fig:1}
		\end{minipage}
		\hfill
		\begin{minipage}[t]{0.48\linewidth}
			\centering
			\vspace{0pt}
			\includegraphics[width=0.75\linewidth]{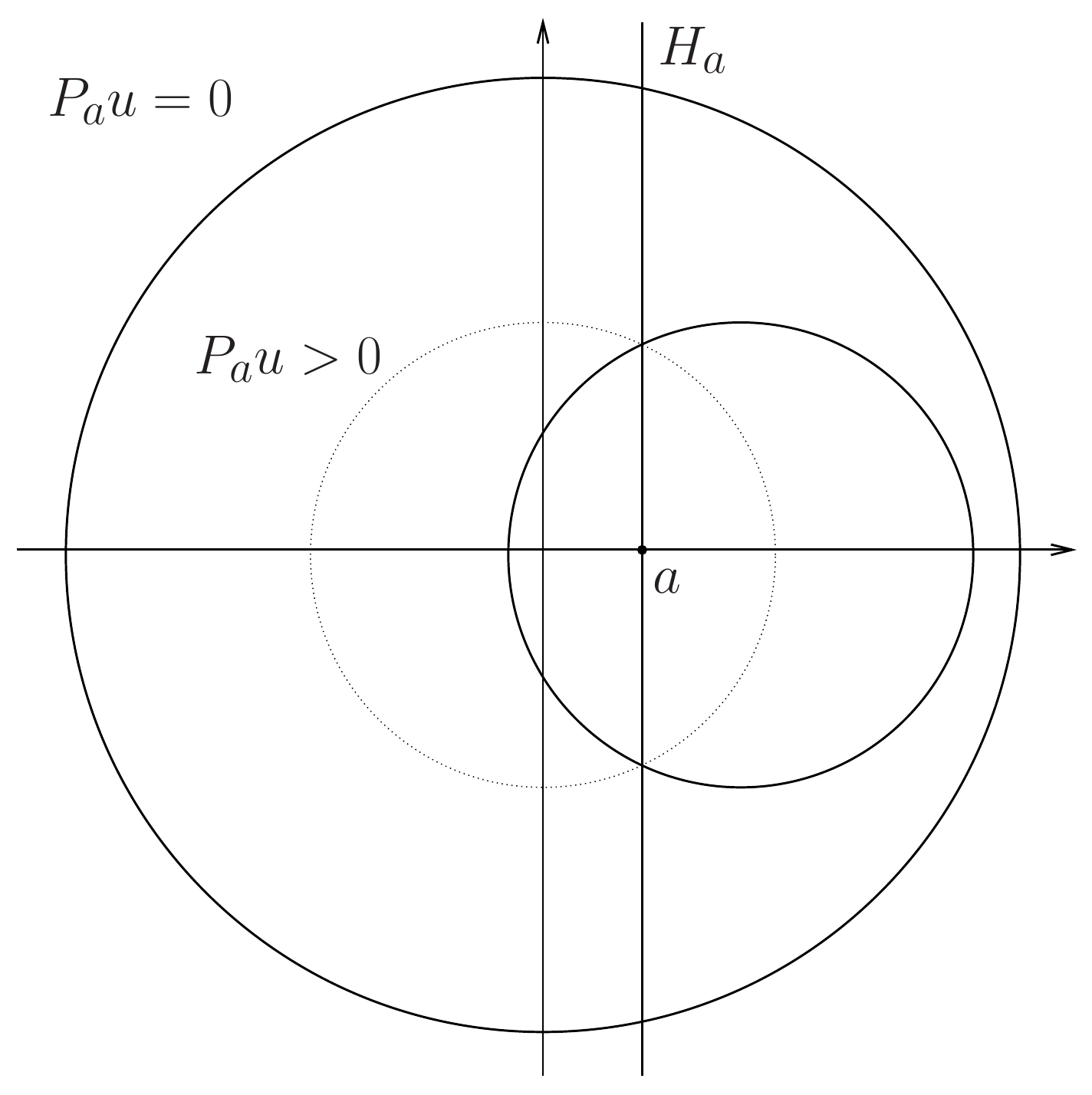}
			\caption{$P_{a} u$ for $a \in (0, \frac{1-r}{2})$.}
			\label{fig:2}
			\hspace*{\fill}
		\end{minipage}
	\end{figure}
	
	In order to show that \textit{Case II} is also impossible, let us prove that $P_a u \not\in ES(\lambda_2)$.
	Suppose this assertion is false,
	i.e., there exist $v \in \text{span}\{u_1,\dots,u_k\}$, $c \in \mathbb{R}$, and $\eta \in \text{span}\{\xi_2,\dots,\xi_N\}$ such that
	$$
	(P_a u)(x)
		=
		v(x) + c \, \xi_1(x) + \eta(x)
		\quad \text{for any}~
		x \in B.
	$$
	Noting that $P_a u$, $v$, and $\xi_1$ are rotationally invariant with respect to the $x_1$-axis, while any element of $\text{span}\{\xi_2,\dots,\xi_N\}\setminus \{0\}$ does not have such symmetry, we conclude that $\eta \equiv 0$ in $B$. 
	On the other hand, $P_a u$ is nonradial and, moreover, $P_a u \not\in \text{span} \{\xi_1\}$ in view of \eqref{eq:nonradP} and the anti-symmetry of $\xi_1$ with respect to the hyperplane $\{x_1=0\}$.
	These two facts enforce $v \not\equiv 0$ in $B$ and $c \neq 0$, and, consequently, we have the decomposition
	\begin{equation}\label{eq:w-decompos}
		(P_a u)(x)
		=
		v(x) + c \, \xi_1(x) 	
		\quad \text{for any}~
		x \in B.
	\end{equation}
	
	Thanks to the fact that the choice of $u$ on p.~\pageref{choice-of-u} (which is the ``generating'' radial second eigenfunction for $P_a u$) was  arbitrary, let us show that $u$ can be chosen in such a way that $r(u) = \widetilde{r}:=\inf r(\zeta)$,  
	where the infimum is taken among all $\zeta \in \text{span}\{u_1,\dots,u_k\} \setminus \{0\}$.
	Noting that $r(t \zeta)=r(\zeta)$ for any $\zeta \in \text{span}\{u_1,\dots,u_k\} \setminus \{0\}$ and any $t>0$,
	we have
	$$
	\widetilde{r}
	=
	\inf 
	\left\{
	r\left(\sum_{i=1}^{k} c_i u_i\right):~ 
	(c_1,\dots,c_k) \in \mathbb{S}^{k-1}
	\right\},
	$$
	where $\mathbb{S}^{k-1}$ stands for the unit $(k-1)$-dimensional sphere in $\mathbb{R}^k$.
	We prove that $\widetilde{r}$ is attained.
	Let $\{(c_{1,n},\dots,c_{k,n})\}_{n \in \mathbb{N}} \subset \mathbb{S}^{k-1}$ be a minimizing sequence for $\widetilde{r}$. 
	Clearly, there exists $(c_1^*,\dots,c_k^*) \in \mathbb{S}^{k-1}$ such that $c_{i,n} \to c_i^*$ as $n \to +\infty$ for any $i \in \{1,\dots,k\}$, up to a subsequence.
	Let us denote $\zeta_n = \sum_{i=1}^{k} c_{i,n} u_i$ and $u^* = \sum_{i=1}^{k} c_i^* u_i$, and show that $r(u^*) = \widetilde{r}$, that is, $u^*$ is a minimizer for $\widetilde{r}$.
	Evidently, the case $r(u^*) < \widetilde{r}$ is impossible due to the definition of $\widetilde{r}$. 
	Thus, suppose, by contradiction, that $r(u^*) > \widetilde{r}$. 
	By the definition \eqref{eq:r} of $r(u^*)$, there exists $\varrho \in (\widetilde{r}, r(u^*))$ such that $u^*(x)<0$ for any $x$ satisfying $|x| = \varrho$.
	On the other hand, since $r(\zeta_n) \to \widetilde{r}$ as $n \to +\infty$, there exists $n_0 \in \mathbb{N}$ such that $r(\zeta_n) < \varrho$ for all $n \geq n_0$, and hence $\zeta_n(x) \geq 0$ for all $n \geq n_0$ and all $x$ satisfying $|x| = \varrho$.
	Thanks to the fact that every $u_i \in L^\infty(B)$ (see \cite[Proposition 4]{Servadei3}), the convergence $c_{i,n} \to c_i^*$ yields
	$$
	|u^*(x) - \zeta_n(x)| 
	=
	|\sum_{i=1}^k c_{i,n} u_i(x) - \sum_{i=1}^k c_i^* u_i(x)| 
	\leq
	\max_{i=1,\dots,k} \|u_i\|_{L^\infty(B)}
	\sum_{i=1}^k |c_{i,n}-c_i^*|  
	\to 0.
	$$
	This implies that $u^*(x) \geq 0$ for any $x$ satisfying $|x| = \varrho$, which is a contradiction.
	Therefore, $\widetilde{r}$ is attained by $u^*$.
	Notice that $u^* \not\equiv 0$, and $\widetilde{r}>0$ since $u^*$ is sign-changing.
	Hereinafter, we assume that $P_a u$ is generated by $u^*$ and we denote $u=u^*$.
	
	Since the eigenfunction $P_a u$ has the decomposition \eqref{eq:w-decompos} and $\xi_1=0$ on $\{x_1=0\}$, we have $P_a u=v$ on $\{x_1=0\}$.
	Thus, 
	we see from the definition \eqref{eq:r} of $r(v)$ that
	$(P_a u)(x) \geq 0$ for any $x$ such that $x_1=0$ and $|x| \geq r(v)$, and for any $\varepsilon>0$ there exists $x^\# $ such that $x_1^\#=0$, $|x^\#| \in [r(v)-\varepsilon,r(v))$, and $(P_a u)(x^\#)<0$.
	Clearly, $\overline{x^\#} \in \Sigma_a^+$ since $a>0$.
	Let us show that $\overline{x^\#} \not\in B_{r(u)}$ provided $\varepsilon < 2a^2$.
	Indeed, since $x_1^\#=0$, and $r(v) \geq r(u)$ by the choice of $u$, we have
	\begin{align*}
		|\overline{x^\#}|^2 
		= 
		(2a)^2 + (x_2^\#)^2 + \dots + (x_N^\#)^2
		=
		(2a)^2 + |x^\#|^2 
		&\geq
		(2a)^2 + (r(v) - \varepsilon)^2
		\\
		&\geq (2a)^2 -2\varepsilon + \varepsilon^2 + r(v)^2
		> r(u)^2.
	\end{align*}
	This implies that $u(\overline{x^\#}) \geq 0$ whenever $\varepsilon \in (0,2a^2)$, which leads to the following contradiction:
	$$
	0>(P_a u)(x^\#) = \max\{u(x^\#), u(\overline{x^\#})\} \geq 0.
	$$
	Therefore, \textit{Case II} is impossible.
	
	By excluding \textit{Cases I} and \textit{II}, we complete the proof of Theorem \ref{thm}.
	\qed

	\medskip
	\subsection*{Acknowledgements}
    The authors are grateful to the anonymous referee whose valuable comments and suggestions helped to improve and clarify the manuscript.
    In particular, his suggestions helped to strengthen the statement of Lemma \ref{lem:1} which led to certain simplifications in the proof of Theorem \ref{thm}.

\end{document}